\newtheorem{theorem}{Theorem}
\newtheorem{prop}[theorem]{Proposition}
\newenvironment{proof-sketch}{\noindent{\bf Sketch of Proof}\hspace*{1em}}{\qed\bigskip}
\newcommand{\RR}{\mathbb R}
\newcommand{\NN}{\mathbb N}
\newcommand{\di}{\displaystyle}
\renewcommand{\leq}{\leqslant}
\renewcommand{\geq}{\geqslant}
\begin{document}
\title[Positive solutions for superdiffusive mixed problems]{Positive solutions for superdiffusive mixed problems}
\author[N.S. Papageorgiou]{Nikolaos S. Papageorgiou}
\address[N.S. Papageorgiou]{National Technical University, Department of Mathematics,
				Zografou Campus, Athens 15780, Greece }
\email{\tt npapg@math.ntua.gr}

\author[V.D. R\u{a}dulescu]{Vicen\c{t}iu D. R\u{a}dulescu}
\address[V.D. R\u{a}dulescu]{Department of Mathematics, Faculty of Sciences,
King Abdulaziz University, P.O. Box 80203, Jeddah 21589, Saudi Arabia \& Department of Mathematics, University of Craiova, Street A.I. Cuza 13,
          200585 Craiova, Romania}
\email{\tt vicentiu.radulescu@imar.ro}

\author[D.D. Repov\v{s}]{Du\v{s}an D. Repov\v{s}}
\address[D.D. Repov\v{s}]{Faculty of Education and Faculty of Mathematics and Physics, University of Ljubljana, SI-1000 Ljubljana, Slovenia}
\email{\tt dusan.repovs@guest.ames.si}

\keywords{Mixed boundary condition, superdiffusive reaction, positive solutions, bifurcation-type result, truncations.\\
\phantom{aa} 2010 AMS Subject Classification. Primary:  35J20. Secondary: 35J25, 35J60}

\begin{abstract}
We study a semilinear parametric elliptic equation with superdiffusive reaction and mixed boundary conditions. Using variational methods, together with suitable truncation techniques, we prove a bifurcation-type theorem describing the nonexistence, existence and multiplicity of positive solutions.
\end{abstract}

\maketitle

\section{Introduction}

Let $\Omega\subseteq\RR^N$ be a bounded domain with a $C^2$-boundary $\partial\Omega$ and let $\Sigma_1,\Sigma_2\subseteq\partial\Omega$ be two $(N-1)$-dimensional $C^2$-submanifolds   of $\partial\Omega$ such that
$\partial\Omega=\Sigma_1\cup\Sigma_2$, $\Sigma_1\cap\Sigma_2=\emptyset$, $|\Sigma_1|_{N-1}\in(0,|\partial\Omega|_{N-1})$, and $\overline{\Sigma_1}\cap\overline{\Sigma}_2=\Gamma.$
Here, $|\cdot|_{N-1}$ denotes the $(N-1)$-dimensional Hausdorff (surface) measure and $\Gamma\subset\partial\Omega$ is a $(N-2)$-dimensional $C^2$-submanifold of $\partial\Omega$.

In this paper, we study the following logistic-type elliptic problem:
\begin{equation}
	\left\{\begin{array}{ll}
		-\Delta u(z)=\lambda u(z)^{q-1}-f(z,u(z))&\mbox{in}\ \Omega,\\
	\di	u|_{\Sigma_1}=0,\ \left.\frac{\partial u}{\partial n}\right|_{\Sigma_2}=0,\ u>0,\ \lambda>0.&
	\end{array}\right\}\tag{$P_{\lambda}$}\label{eqp}
\end{equation}

When $f(z,x)=x^{r-1}$ with $r\in(2,2^*)$, we get the classical logistic equation, which is important in biological models (see Gurtin \& Mac Camy \cite{14}). Depending on the value of $q>1$, we distinguish three cases: (i)
 $1<q<2$ (subdiffusive logistic equation); (ii)
	 $2=q<r$ (equidiffusive logistic equation);
	(iii)  $2<q<r$ (superdiffusive logistic equation).
In this paper, we deal with the third situation (superdiffusive case), which exhibits bifurcation-type phenomena for large values of the parameter $\lambda>0$.

Let
$E_{\Sigma_1}=\{u\in H^1(\Omega):u|_{\Sigma_1}=0\}$.
This space is defined as the closure of $C^1_c(\Omega\cup\Sigma_1)$ with respect to the $H^1(\Omega)$-norm. Since  $|\Sigma_1|_{N-1}>0$, we know that for the space $E_{\Sigma_1}$, the Poincar\'e inequality holds (see Gasinski \& Papageorgiou \cite[Problem 1.139, p. 58]{13}). So, $E_{\Sigma_1}$ is a Hilbert space equipped with the norm
$||u||=||Du||_2.$
Let $\mathcal{A}\in\mathcal{L}(E_{\Sigma_1},E_{\Sigma_1}^*)$ be defined by
$\left\langle A(u),h\right\rangle=\int_{\Omega}(Du,Dh)_{\RR^N}dz$ for all $u,h\in E_{\Sigma_1}.$
We denote by $N_f$ the Nemitsky map associated with $f$, that is,
$N_f(u)(\cdot)=f(\cdot,u(\cdot))$ for all $u\in E_{\Sigma_1}.$

The hypotheses on the perturbation term $f(z,x)$ are the following:

\smallskip
$H(f):$ $f:\Omega\times\RR\rightarrow\RR$ is a Carath\'eodory function such that for almost all $z\in\Omega$, $ f(z,0)=0$, $f(z,x)\geq 0$ for all $x>0$, and
\begin{itemize}
	\item[(i)] $f(z,x)\leq a(z)(1+x^{r-1})$ for almost all $z\in\Omega$ and all $x\geq 0$, with $a\in L^{\infty}(\Omega)$, $2<q<r<2^*;$
	\item[(ii)] $\lim\limits_{x\rightarrow+\infty}\frac{f(z,x)}{x^{q-1}}=+\infty$ uniformly for almost all $z\in\Omega$, and the mapping $x\mapsto\frac{f(z,x)}{x}$ is nondecreasing on $(0,+\infty)$ for almost all $z\in\Omega$;
	\item[(iii)] $0\leq\liminf\limits_{x\rightarrow 0^+}\frac{f(z,x)}{x}\leq\limsup\limits_{x\rightarrow 0^+}\frac{f(z,x)}{x}\leq\hat{\eta}$ uniformly for almost all $z\in\Omega$;
	\item[(iv)] for every $\rho>0$, there exists $\hat{\xi}_{\rho}>0$ such that for almost all $z\in\Omega$ the function $x\mapsto\hat{\xi}_{\rho}x-f(z,x)$ is nondecreasing on $[0,\rho]$.
\end{itemize}

	The following functions satisfy hypotheses $H(f)$:
	(i) $f(x)=x^{r-1}$ for all $x\geq 0$ with $2<q<r<2^*$;
	(ii) $f(x)=x^{q-1}\left[\ln(1+x)+\frac{1}{q}\ \frac{x}{1+x}\right]$ for all $x\geq 0$, with $2<q<2^*$.

Let
$\mathcal{L}=\{\lambda>0:\mbox{problem}\ (\refeq{eqp})\ \mbox{has a positive solution}\}$ and let
$S(\lambda)$ denote the set of positive solutions of problem (\refeq{eqp}). Let $\lambda_*=\inf\mathcal{L}$ (if $\mathcal{L}=\emptyset$, then $\inf\emptyset=+\infty$).

By a solution of problem \eqref{eqp}, we understand a function $u\in E_{\Sigma_1}$ such that
$u\geq 0$, $u\neq 0$ and $\left\langle A(u),h\right\rangle=\int_{\Omega}[\lambda u^{q-1}-f(z,u)]hdz$ for all $h\in E_{\Sigma_1}.$

We refer to Bonanno, D'Agui \& Papageorgiou \cite{bona}, Filippucci, Pucci \& R\u adulescu \cite{fpr}, and Li, Ruf, Guo \& Niu \cite{lipaper} for related results. We also refer to the monograph by Pucci \& Serrin \cite{18} for more results concerning the abstract setting of this paper.

\section{A Bifurcation-Type Theorem}

\begin{prop}\label{prop2}
	If hypotheses $H(f)$ hold, then $S(\lambda)\subseteq C^{1,\alpha}(\Omega)\cap C^{0,\alpha}(\overline{\Omega})$ with $\alpha\in(0,1/2)$. For all $u\in S(\lambda)$ we have
	$u(z)>0$ for all $z\in\Omega$ and $\lambda_*>0.$
\end{prop}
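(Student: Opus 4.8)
The plan is to prove the three assertions in order, starting from the regularity and boundedness of an arbitrary $u\in S(\lambda)$. By definition $u\in E_{\Sigma_1}\subseteq H^1(\Omega)$ solves $\langle A(u),h\rangle=\int_\Omega[\lambda u^{q-1}-f(z,u)]h\,dz$ for all $h\in E_{\Sigma_1}$. Writing the reaction as $g(z)=\lambda u(z)^{q-1}-f(z,u(z))$ and using $H(f)(i)$ together with $u\ge 0$, one has $0\le g^+(z)\le\lambda u^{q-1}+a(z)(1+u^{r-1})$ with $r<2^*$. I would then run a Moser / Brezis--Kato iteration: starting from the Sobolev embedding $u\in L^{2^*}(\Omega)$, the subcritical exponent $r-1<2^*-1$ lets one bootstrap $g$ into successively higher Lebesgue spaces and conclude $u\in L^\infty(\Omega)$. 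Once $g\in L^\infty(\Omega)$, the Hölder regularity theory for the mixed (Dirichlet--Neumann) boundary value problem yields $u\in C^{1,\alpha}(\Omega)\cap C^{0,\alpha}(\overline\Omega)$, the restriction $\alpha\in(0,1/2)$ being exactly the optimal exponent forced by the interface $\Gamma=\overline{\Sigma_1}\cap\overline{\Sigma_2}$, where the boundary condition switches type.

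For the interior positivity I would use $H(f)(iv)$. Set $\rho=\|u\|_\infty$, finite by the previous step, and pick the associated $\hat{\xi}_\rho>0$. Adding $\hat{\xi}_\rho u$ to both sides of the equation gives, weakly, $-\Delta u+\hat{\xi}_\rho u=\lambda u^{q-1}+[\hat{\xi}_\rho u-f(z,u)]$. Since $f(z,\cdot)\ge 0$, $u\ge 0$, $f(z,0)=0$, and $x\mapsto\hat{\xi}_\rho x-f(z,x)$ is nondecreasing on $[0,\rho]$, both summands on the right are nonnegative, so $-\Delta u+\hat{\xi}_\rho u\ge 0$. As $u\ge 0$ and $u\neq 0$, the strong maximum principle then forces $u(z)>0$ for every $z\in\Omega$.

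For $\lambda_*>0$ I would argue by contradiction, showing that no positive solution exists once $\lambda$ is small. Testing the weak formulation with $h=u$ gives the identity $\|Du\|_2^2+\int_\Omega f(z,u)u\,dz=\lambda\|u\|_q^q$. The superlinearity $H(f)(ii)$, namely $f(z,x)/x^{q-1}\to+\infty$ uniformly in $z$, provides a level $x_\lambda>0$ beyond which $f(z,x)\ge 2\lambda x^{q-1}$; splitting $\Omega$ according to whether $u<x_\lambda$ or $u\ge x_\lambda$ lets me absorb the portion of $\lambda\|u\|_q^q$ on $\{u\ge x_\lambda\}$ into $\tfrac12\int_\Omega f(z,u)u\,dz$, while on $\{u<x_\lambda\}$ one estimates $\lambda u^q\le\lambda x_\lambda^{q-2}u^2$. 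Feeding this back into the identity and discarding the nonnegative $f$-term yields $\|Du\|_2^2\le\lambda x_\lambda^{q-2}\|u\|_2^2$, whence the Poincaré inequality on $E_{\Sigma_1}$ gives $\hat{\lambda}_1\le\lambda x_\lambda^{q-2}$ for the first eigenvalue $\hat{\lambda}_1>0$. Since $x_\lambda$ may be chosen nonincreasing in $\lambda$, and hence bounded as $\lambda\downarrow 0$, the right-hand side tends to $0$, contradicting $\hat{\lambda}_1>0$; therefore $\mathcal{L}$ omits a neighbourhood of $0$ and $\lambda_*>0$.

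I expect the main obstacle to be the regularity step, specifically the justification of the precise exponent $\alpha<1/2$. Away from $\Gamma$ the problem is either purely Dirichlet or purely Neumann, and standard Schauder and $L^p$ theory applies; but along the $(N-2)$-dimensional interface $\Gamma$ the change of boundary condition produces a genuine loss of smoothness, so the $C^{1,\alpha}$ estimate must be imported from the specialized regularity theory for mixed boundary value problems rather than from classical boundary regularity. The energy estimate behind $\lambda_*>0$ is technically lighter, but it does require keeping the splitting level $x_\lambda$ uniform in $z$ and controlled as $\lambda\to 0^+$.
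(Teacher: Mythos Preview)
Your proposal is correct and follows the same overall architecture as the paper, with only minor differences in the tools chosen at each step. For regularity, the paper simply cites DiBenedetto and Colorado--Peral for the mixed Dirichlet--Neumann problem, which is precisely the specialized theory you plan to invoke after a Moser/Brezis--Kato bootstrap; your version just spells out the $L^\infty$ step explicitly. For interior positivity, the paper appeals directly to the Harnack inequality, whereas you add $\hat{\xi}_\rho u$ to both sides and use $H(f)(\mathrm{iv})$ together with the strong maximum principle; both are standard and valid, and your route makes more explicit use of the structural hypotheses. For $\lambda_*>0$, the paper compresses the argument into a single pointwise inequality $\lambda_0 x^{q-1}-f(z,x)\leq\hat{\lambda}_1 x$ for all $x\geq 0$ (using $q>2$ near the origin and the superlinearity $H(f)(\mathrm{ii})$ at infinity), then tests with $u_\lambda$ to contradict the variational characterization of $\hat{\lambda}_1$; your splitting over $\{u<x_\lambda\}$ and $\{u\geq x_\lambda\}$ achieves exactly the same contradiction in a slightly more hands-on way. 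In short, the two proofs are the same in substance, and your expectation that the only delicate point is importing the $C^{0,\alpha}(\overline\Omega)$ regularity with $\alpha<1/2$ from the mixed-boundary literature is accurate.
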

\begin{proof}
	From DiBenedetto \cite{7} and Colorado \& Peral \cite{5}, we know that if
	$u\in S(\lambda)$ then $u\in C^{1,\alpha}(\Omega)\cap C^{0,\alpha}(\overline{\Omega})$ with $\alpha\in(0,1/2).$
	Moreover, using Harnack's inequality, we deduce that if
	$u\in S(\lambda)$ then $u(z)>0$ for all $z\in\Omega.$
	Let $\hat{\lambda}_1$ be the smallest eigenvalue of $-\Delta$ with mixed boundary conditions. From Colorado \& Peral \cite[p. 482]{5}, we know that
	$\hat{\lambda}_1=\inf\left\{\frac{||Du||^2_2}{||u||^2_2}:u\in E_{\Sigma_1}\setminus \{0\}\right\}>0$.
	By $H(f)(i),(iii)$, there exists $\lambda_0>0$ such that
	\begin{equation}\label{eq2}
		\lambda_0x^{q-1}-f(z,x)\leq\hat{\lambda}_1x\ \mbox{for almost all}\ z\in\Omega,\ \mbox{and all}\ x\geq 0
	\end{equation}
	(recall that $2<q<r$).
Let $\lambda\in(0,\lambda_0)$ and suppose that $\lambda\in\mathcal{L}$. Then there exists $u_{\lambda}\in S(\lambda)$ and by using Green's identity, we get
	\begin{equation}\label{eq3}
		A(u_{\lambda})=\lambda u_{\lambda}^{q-1}-N_f(u_{\lambda})\ \mbox{in}\ E^*_{\Sigma_1}.
	\end{equation}	
We act	on (\ref{eq3}) with $u_{\lambda}\in E_{\Sigma_1}$ and obtain
	$	||Du_{\lambda}||^2_2=\lambda||u_{\lambda}||^q_q-\int_{\Omega}f(z,u_{\lambda})u_{\lambda}dz
		<\hat{\lambda}_1||u_{\lambda}||^2_2$ (see (\ref{eq2}) and recall that $\lambda<\lambda_0,u_{\lambda}(z)>0$ for all $z\in\Omega$),
		which contradicts the definition of $\hat{\lambda}_1$. Therefore $\lambda\notin\mathcal{L}$ and we have
		$0<\lambda_0\leq\lambda_*=\inf\mathcal{L}.$
\end{proof}

\begin{prop}\label{prop3}
	If hypotheses $H(f)$ hold, then $\mathcal{L}\neq\emptyset$ and ``$\lambda\in\mathcal{L},\eta>\lambda\Rightarrow \eta\in\mathcal{L}$".
\end{prop}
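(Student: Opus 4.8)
The plan is to prove the two assertions separately: the implication ``$\lambda\in\mathcal{L},\ \eta>\lambda\Rightarrow\eta\in\mathcal{L}$'' by a sub--supersolution argument, and the nonemptiness $\mathcal{L}\neq\emptyset$ by producing a positive solution for all large $\lambda$ via direct minimization of a coercive truncated functional.

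For the monotonicity, fix $\lambda\in\mathcal{L}$ with some $u_\lambda\in S(\lambda)$ and let $\eta>\lambda$. Since $u_\lambda(z)>0$ on $\Omega$ by Proposition~\ref{prop2}, we have $\eta u_\lambda^{q-1}-f(z,u_\lambda)\ge\lambda u_\lambda^{q-1}-f(z,u_\lambda)=-\Delta u_\lambda$, so $u_\lambda$ is a subsolution of $(P_\eta)$. For a supersolution I would invoke $H(f)(ii)$: because $f(z,x)/x^{q-1}\to+\infty$ uniformly, one can choose a constant $M\ge\|u_\lambda\|_\infty$ so large that $\eta M^{q-1}-f(z,M)\le0$ for almost all $z\in\Omega$; then $\bar u\equiv M$ satisfies $-\Delta\bar u=0\ge\eta\bar u^{q-1}-f(z,\bar u)$ and $u_\lambda\le\bar u$. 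Truncating the reaction $x\mapsto\eta(x^+)^{q-1}-f(z,x^+)$ to the order interval $[u_\lambda,M]$ produces a coercive, sequentially weakly lower semicontinuous functional whose global minimizer $u_\eta$ lies in $[u_\lambda,M]$ and solves $(P_\eta)$; in particular $u_\eta\ge u_\lambda>0$, so $\eta\in\mathcal{L}$.

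For $\mathcal{L}\neq\emptyset$ I would work with
\[
\varphi_\lambda(u)=\tfrac12\|Du\|_2^2-\tfrac{\lambda}{q}\|u^+\|_q^q+\int_\Omega F(z,u^+)\,dz,\qquad F(z,x)=\int_0^x f(z,s)\,ds,
\]
whose critical points yield the nonnegative solutions of $(P_\lambda)$. The decisive step is coercivity: by $H(f)(ii)$, for every $\mu>0$ there is $C_\mu>0$ with $F(z,x)\ge\mu x^q-C_\mu$ for all $x\ge0$, so choosing $\mu>\lambda/q$ gives $\varphi_\lambda(u)\ge\tfrac12\|u\|^2-C$, hence coercivity. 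Together with the weak lower semicontinuity of $\varphi_\lambda$ (from convexity of $u\mapsto\|Du\|_2^2$ and the compact embedding $E_{\Sigma_1}\hookrightarrow L^r(\Omega)$), the direct method yields a global minimizer $u_\lambda$. To see $u_\lambda\neq0$ for large $\lambda$, I would evaluate $\varphi_\lambda$ at a fixed $v\in E_{\Sigma_1}$ with $v>0$: as $\lambda\to+\infty$ the term $-\tfrac{\lambda}{q}\|v\|_q^q$ forces $\varphi_\lambda(v)\to-\infty$, so $\inf\varphi_\lambda<0=\varphi_\lambda(0)$ and the minimizer is nontrivial. Finally, testing $\varphi_\lambda'(u_\lambda)=0$ with $-u_\lambda^-$ and using that the truncated reaction vanishes on $\{u_\lambda\le0\}$ gives $\|u_\lambda^-\|=0$, so $u_\lambda\ge0$; being nonzero, $u_\lambda\in S(\lambda)$ by Proposition~\ref{prop2}, whence $\lambda\in\mathcal{L}$.

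The main obstacle is the monotonicity step: one must carry out the sub--supersolution scheme rigorously in the mixed boundary-value setting and, in particular, guarantee the ordering $u_\lambda\le u_\eta$. This is where $H(f)(iv)$ enters — with $\rho=M$ the map $x\mapsto\hat\xi_\rho x-f(z,x)$ is nondecreasing on $[0,M]$, so adding $\hat\xi_\rho u$ to both sides recasts the truncated problem with a monotone reaction, and a weak comparison argument then confirms that the minimizer respects the order interval $[u_\lambda,M]$. The coercivity and nontriviality estimates above are, by contrast, routine once $H(f)(ii)$ and $H(f)(iii)$ are in hand.
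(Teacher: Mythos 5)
Your proof is correct, and its two halves relate to the paper differently. The nonemptiness half is essentially the paper's own argument: direct minimization of the coercive functional $\varphi_\lambda$ (coercivity from $F(z,x)\geq\frac{\xi}{q}x^q-c_1$ with $\xi>\lambda$, a consequence of $H(f)(i),(ii)$), nontriviality of the minimizer for large $\lambda$ by evaluating at a fixed positive function, and the test with $-u_\lambda^-$ to get nonnegativity. The monotonicity half uses the same truncation--minimization framework as the paper but differs in two details, both making your route more elementary. First, you take $u_\lambda$ itself as a subsolution of $(P_\eta)$, which works immediately since $\eta>\lambda$ and $u_\lambda>0$; the paper instead uses the scaled function $\underline{u}=\vartheta u_\lambda$ with $\vartheta^{q-2}\eta=\lambda$, which forces it to invoke the monotonicity of $x\mapsto f(z,x)/x$ from $H(f)(ii)$ to verify the subsolution inequality --- a step you avoid entirely. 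Second, you truncate the reaction on both sides of the order interval $[u_\lambda,M]$, with the constant supersolution $M$ supplied by the superdiffusive limit in $H(f)(ii)$, so the truncated functional is trivially coercive and you also get the a priori bound $u_\eta\leq M$; the paper truncates only from below at $\underline{u}$ and instead derives coercivity from the estimate $\eta x^{q-1}-f(z,x)\leq(\eta-\xi)x^{q-1}+c_2$, again via $H(f)(ii)$. Both routes are valid. One correction to your commentary rather than to your proof: you present the ordering $u_\lambda\leq u_\eta$ as the main obstacle and say it is ``where $H(f)(iv)$ enters.'' In fact $H(f)(iv)$ is not needed anywhere in this proposition (the paper never uses it here): once $u_\eta$ satisfies the Euler equation $A(u_\eta)=N_{g}(u_\eta)$ for the truncated reaction $g$, testing with $(u_\lambda-u_\eta)^+$ and using only that $\eta u_\lambda^{q-1}-f(z,u_\lambda)\geq\lambda u_\lambda^{q-1}-f(z,u_\lambda)$ gives $\|D(u_\lambda-u_\eta)^+\|_2^2\leq 0$, hence $u_\lambda\leq u_\eta$; similarly testing with $(u_\eta-M)^+$ gives $u_\eta\leq M$. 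So the step you single out as delicate is the routine part of the scheme.
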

\begin{proof}
	Fix $\lambda>0$ and let $\varphi_{\lambda}:E_{\Sigma_1}\rightarrow\RR$,
	$\varphi_{\lambda}(u)=\frac{1}{2}||Du|^2_2-\frac{\lambda}{q}||u^+||^q_q+\int_{\Omega}F(z,u)dz$,
	where $F(z,x)=\int^x_0f(z,s)ds$. Then $\varphi_{\lambda}\in C^1(E_{\Sigma_1})$ and $\varphi_{\lambda}$ is sequentially weakly lower semicontinuous.	
	Hypotheses $H(f)(i),(ii)$ imply that given $\xi>0$, we can find $c_1=c_1(\xi)>0$ such that
	$F(z,x)\geq\frac{\xi}{q}x^q-c_1$ for almost all $z\in\Omega$ and for all $x\geq 0$.
	Thus, for all $u\in E_{\Sigma_1}$  we have
	$\varphi_{\lambda}(u)\geq\frac{1}{2}||Du||^2_2+\frac{\xi-\lambda}{2}||u^+||^q_q-c_1|\Omega|_N$.
	Choosing $\xi>\lambda$, we deduce  that $\varphi_{\lambda}$ is coercive. So, by the Weierstrass-Tonelli theorem, there exists $u_{\lambda}\in E_{\Sigma_1}$ such that
	\begin{equation}\label{eq6}
		\varphi_{\lambda}(u_{\lambda})=\inf\{\varphi_{\lambda}(u):u\in E_{\Sigma_1}\}=m_{\lambda}.
	\end{equation}
		Fix $\bar{u}\in E_{\Sigma_1}\cap C(\overline{\Omega})$ with $u(z)>0$ for all $z\in\Omega$. For large enough $\lambda>0$ we have
		$\varphi_{\lambda}(\bar{u})<0$, hence
			$\varphi_{\lambda}(u_{\lambda})=m_{\lambda}<0=\varphi_{\lambda}(0)$ (see (\ref{eq6})). Thus,
			$ u_{\lambda}\neq 0$.
				By (\ref{eq6}), $\varphi_{\lambda}'(u_{\lambda})=0$, hence
		\begin{equation}\label{eq7}
			A(u_{\lambda})=\lambda(u^+_{\lambda})^{q-1}-N_f(u_{\lambda})\ \mbox{in}\ E^*_{\Sigma_1}.
		\end{equation}
		We act on (\ref{eq7}) with $-u^-_{\lambda}\in E_{\Sigma_1}$ and obtain
		$||Du^-_{\lambda}||^2_2=0$, hence
			$u_{\lambda}\geq 0$.
		So, relation (\ref{eq7}) becomes
		$A(u_{\lambda})=\lambda u^{q-1}_{\lambda}-N_f(u_{\lambda})$. By Green's identity,
			$u_{\lambda}\in S(\lambda)$, hence
		 $\lambda\in\mathcal{L}\neq\emptyset$.
		
		Next, let $\lambda\in\mathcal{L}$ and $\eta>\lambda$. Choose $\vartheta\in(0,1)$ such that $\lambda=\vartheta^{q-2}\eta$ (recall that $2<q$). Also, let $u_{\lambda}\in S(\lambda)\subseteq C^{1,\alpha}(\Omega)\cap C^{0,\alpha}(\overline{\Omega})$ with $\alpha\in(0,1/2)$. Let $\underline{u}=\vartheta u_{\lambda}$. Then
		\begin{equation}\label{eq8}
			A(\underline{u})=\vartheta A(u_{\lambda})=\vartheta\left[\lambda u^{q-1}_{\lambda}-N_f(u_{\lambda})\right]\ \mbox{in}\ E^*_{\Sigma_1}.
		\end{equation}
		
		From hypothesis $H(f)(ii)$ and since $u_{\lambda}(z),\underline{u}(z)>0$ for all $z\in\Omega$, we have for a.a. $z\in\Omega$
		\begin{equation}\label{eq9}
			\frac{f(z,\underline{u}(z))}{\underline{u}(z)}\leq\frac{f(z,u_{\lambda}(z))}{u_{\lambda}(z)}
			\Rightarrow f(z,\underline{u}(z))\leq\vartheta f(z,u_{\lambda}(z))\ (\mbox{recall that}\ \underline{u}=\vartheta u_{\lambda}).
		\end{equation}
		
		Using (\ref{eq8}) in (\ref{eq9}) and since $\vartheta\in(0,1)$, we obtain
		\begin{equation}\label{eq10}
			A(\underline{u})\leq\vartheta^{q-1}\eta u_{\lambda}^{q-1}-N_f(\underline{u})\leq \eta \underline{u}^{q-1}-N_f(\underline{u})\ \mbox{in}\ E^*_{\Sigma_1}.
		\end{equation}
		
		We introduce the following Carath\'eodory truncation of the reaction term in problem ($P_{\eta}$)
		\begin{equation}\label{eq11}
			g_{\eta}(z,x)=\left\{\begin{array}{ll}
				\eta\underline{u}(z)^{q-1}-f(z,\underline{u}(z))&\mbox{if}\ x\leq\underline{u}(z)\\
				\eta x^{q-1}-f(z,x)&\mbox{if}\ \underline{u}(z)<x.
			\end{array}\right.
		\end{equation}	
		 Let $G_{\eta}(z,x)=\int^x_0g_{\eta}(z,s)ds$ and define $\hat{\varphi}_{\eta}:E_{\Sigma_1}\rightarrow\RR$  by
		$\hat{\varphi}_{\eta}(u)=\frac{1}{2}||Du||^2_2-\int_{\Omega}G_{\eta}(z,u)dz$.
		
		Hypotheses $H(f)(i),(ii)$ imply that given $\xi>0$, we can find $c_2=c_2(\xi)>0$ such that
		\begin{equation}\label{eq12}
			\eta x^{q-1}-f(z,x)\leq (\eta-\xi)x^{q-1}+c_2\ \mbox{for almost all}\ z\in\Omega\ \mbox{and all}\ x\geq 0.
		\end{equation}
		
		Then for all $u\in E_{\Sigma_1}$, we have
		\begin{eqnarray}\label{eq13}
			&&\hat{\varphi}_{\eta}(u)\geq \frac{1}{2}||Du||^2_2+\frac{\xi-\eta}{q}||u^+||^q_q-c_3\ \mbox{for some}\ c_3>0\ (\mbox{see (\ref{eq11}), (\ref{eq12})}).
		\end{eqnarray}	
		Choosing $\xi>\eta$, we see  from (\ref{eq13}) that
		$\hat{\varphi}_{\eta}$ is coercive.
		This function is also sequentially weakly lower semicontinuous. So, by the Weierstrass-Tonelli theorem, there exists $u_{\eta}\in E_{\Sigma_1}$ such that $\hat{\varphi}_{\eta}(u_{\eta})=\inf[\hat{\varphi}_{\eta}(u):u\in E_{\Sigma_1}]$, hence $\hat{\varphi}'_{\eta}(u_{\eta})=0$. We deduce that
		\begin{equation}\label{eq14}
			A(u_{\eta})=N_{g_{\eta}}(u_{\eta})\ \mbox{in}\ E_{\Sigma_1}^*.
		\end{equation}
		
		We act on (\ref{eq14}) with $(\underline{u}-u_{\eta})^+\in E_{\Sigma_1}$. By (\ref{eq11}) and (\ref{eq10}) we have
		\begin{eqnarray*}
			\left\langle A(u_{\eta}),(\underline{u}-u_{\eta})^+\right\rangle&=&\int_{\Omega}[\eta\underline{u}^{q-1}-
f(z,\underline{u})](\underline{u}-u_{\eta})^+dz\ \geq\left\langle A(\underline{u}), (\underline{u}-u_{\eta})^+\right\rangle
		\end{eqnarray*}
		\begin{equation}\label{eq15}
			\Rightarrow\left\langle A(\underline{u}-u_{\eta}),(\underline{u}-u_{\eta})^+\right\rangle\leq 0\Rightarrow||D(\underline{u}-u_{\eta})^+||^2_2\leq 0\Rightarrow\underline{u}\leq u_{\eta}.
		\end{equation}
		
		Using (\ref{eq11}) and (\ref{eq15}) we see that relation (\ref{eq14}) becomes
		$A(u_{\lambda})=\eta u^{q-1}_{\eta}-N_f(u_{\eta})$ in $E^*_{\Sigma_1}$. Thus, by Proposition \ref{prop2}, we have
			$u_{\eta}\in S(\eta)\subseteq C^{1,\alpha}(\Omega)\cap C^{0,\alpha}(\overline{\Omega})$.
		Therefore $\eta\in\mathcal{L}$. We also observe that Proposition \ref{prop3} implies $(\lambda_*,+\infty)\subseteq\mathcal{L}.$
\end{proof}

\begin{prop}\label{prop4}
	If hypotheses $H(f)$ hold and $\lambda>\lambda_*$, then problem \eqref{eqp} has at least two positive solutions $u_0,\hat{u}\in E_{\Sigma_1}\cap C^{0,\alpha}(\overline{\Omega})$ for  $\alpha\in(0,1/2)$ with $0<u_0(z),\hat{u}(z)$ for all $z\in\Omega$.
\end{prop}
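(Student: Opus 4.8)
The plan is to obtain the first solution as a global minimizer of a sub-solution truncation, upgrade it to a local minimizer of the energy functional $\varphi_\lambda$, and then extract the second solution by a mountain pass argument between this local minimizer and the origin. Since $\lambda>\lambda_*$, I would first fix $\mu\in(\lambda_*,\lambda)$; by Proposition \ref{prop3} we have $\mu\in\mathcal{L}$, so there is $u_\mu\in S(\mu)\subseteq C^{1,\alpha}(\Omega)\cap C^{0,\alpha}(\overline{\Omega})$ with $u_\mu>0$ in $\Omega$, and since $\lambda>\mu$ and $u_\mu>0$, $u_\mu$ is a strict subsolution of \eqref{eqp}. Repeating the truncation \eqref{eq11}--\eqref{eq15} with $\underline{u}$ replaced by $u_\mu$ and $\eta$ by $\lambda$, I would form the coercive, sequentially weakly lower semicontinuous functional $\psi_\lambda(u)=\frac12\|Du\|_2^2-\int_\Omega G_\lambda(z,u)\,dz$, take its global minimizer $u_0$, and act with $(u_\mu-u_0)^+$ exactly as in \eqref{eq15} to deduce $u_\mu\le u_0$. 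On $\{u\ge u_\mu\}$ the truncated and genuine potentials differ only by a function of $z$, so $u_0\in S(\lambda)$ is a positive solution and $\psi_\lambda=\varphi_\lambda+\mathrm{const}$ there.

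Next I would show $u_0-u_\mu\in\mathrm{int}\,C_+$. Applying hypothesis $H(f)(iv)$ with $\rho=\max\{\|u_0\|_\infty,\|u_\mu\|_\infty\}$, the monotonicity of $x\mapsto\hat{\xi}_\rho x-f(z,x)$ together with $\lambda>\mu$ and $u_0\ge u_\mu$ gives $-\Delta(u_0-u_\mu)+\hat{\xi}_\rho(u_0-u_\mu)\ge(\lambda-\mu)u_\mu^{q-1}>0$ in $\Omega$; the strong maximum principle and the Hopf boundary lemma for the mixed problem then yield $u_0-u_\mu\in\mathrm{int}\,C_+$. Consequently a whole $C^{0,\alpha}(\overline{\Omega})$-neighborhood of $u_0$ stays above $u_\mu$, so on it $\varphi_\lambda$ and $\psi_\lambda$ differ by a constant and $u_0$ is a local minimizer of $\varphi_\lambda$ in $C^{0,\alpha}(\overline{\Omega})$. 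Invoking the equivalence of $C^{0,\alpha}(\overline{\Omega})$- and $E_{\Sigma_1}$-local minimizers (the Brezis--Nirenberg-type result adapted to the mixed boundary value problem), I conclude that $u_0$ is a local minimizer of $\varphi_\lambda$ in $E_{\Sigma_1}$.

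For the second solution I would exploit two features of $\varphi_\lambda$. First, since $q>2$ and $F\ge 0$, the bound $\varphi_\lambda(u)\ge\frac12\|u\|^2-c\|u\|^q$ near the origin shows that $0$ is a strict local minimizer with $\varphi_\lambda(0)=0$; second, testing $\varphi_\lambda'(u)=0$ with $-u^-$ shows every critical point of $\varphi_\lambda$ is nonnegative. If $u_0$ is not a strict local minimizer, every neighborhood contains distinct minimizers, producing infinitely many positive solutions and we are done. Otherwise $0$ and $u_0\neq 0$ are two strict local minimizers; coercivity of $\varphi_\lambda$ together with the compact embedding $E_{\Sigma_1}\hookrightarrow L^r(\Omega)$ yields the Palais--Smale condition, and since any path joining $0$ to $u_0$ must cross both separating spheres, the mountain pass value satisfies $c>\max\{\varphi_\lambda(0),\varphi_\lambda(u_0)\}$. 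Hence the associated critical point $\hat{u}$ is distinct from $0$ and $u_0$, nonnegative and nonzero, so $\hat{u}\in S(\lambda)$; by Proposition \ref{prop2} both $u_0,\hat{u}\in E_{\Sigma_1}\cap C^{0,\alpha}(\overline{\Omega})$ are positive in $\Omega$.

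The main obstacle will be the transfer from $C^{0,\alpha}(\overline{\Omega})$-local minimality to $E_{\Sigma_1}$-local minimality: unlike the pure Dirichlet or Neumann cases, the regularity theory degenerates along the interface $\Gamma=\overline{\Sigma_1}\cap\overline{\Sigma_2}$, which is exactly why only exponents $\alpha\in(0,1/2)$ are available, and the comparison/maximum-principle step feeding into $u_0-u_\mu\in\mathrm{int}\,C_+$ must be carried out with the mixed boundary conditions in mind. Note that this argument never requires $\varphi_\lambda(u_0)<0$, so it applies for every $\lambda>\lambda_*$ rather than only for large $\lambda$.
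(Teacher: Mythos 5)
Your overall architecture (truncation at $u_\mu$ to get a first solution $u_0\geq u_\mu$, upgrading $u_0$ to a local minimizer of $\varphi_\lambda$, then a mountain pass between $0$ and $u_0$) is the same as the paper's, and the first and last thirds of your argument are essentially the paper's proof. But the middle step, which you yourself flag as ``the main obstacle,'' is not merely an obstacle to be noted: as written it contains a false claim and an appeal to a theorem that does not exist in this setting. First, the assertion that ``a whole $C^{0,\alpha}(\overline{\Omega})$-neighborhood of $u_0$ stays above $u_\mu$'' is false: every admissible function vanishes on $\Sigma_1$, so the order set $\left[u_{\mu}\right)$ has \emph{empty interior} in $C^{0,\alpha}(\overline{\Omega})\cap E_{\Sigma_1}$. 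Indeed, the gap $u_0-u_\mu$ vanishes on $\Sigma_1$ (linearly at interior points of $\Sigma_1$, like $\mathrm{dist}(\cdot,\Gamma)^{1/2}$ near the interface), whereas perturbations such as $w_\varepsilon=\min\{\mathrm{dist}(\cdot,\Sigma_1)^{\beta},\varepsilon\}$ with $\alpha<\beta<1$ have arbitrarily small $C^{0,\alpha}$-norm yet dominate any such gap near $\Sigma_1$; hence $u_0-w_\varepsilon\notin\left[u_\mu\right)$ no matter how small $\varepsilon$ is. For the same reason ``$u_0-u_\mu\in\mathrm{int}\,C_+$'' cannot even be formulated: there is no global $C^1(\overline{\Omega})$ regularity up to $\Gamma$, and the positive cone of the available space has empty interior, so the strong maximum principle plus Hopf lemma do not produce an open ordering condition here. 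Second, the ``Brezis--Nirenberg-type equivalence of $C^{0,\alpha}$- and $E_{\Sigma_1}$-local minimizers adapted to the mixed problem'' is not a citable result --- establishing a workable substitute for it is precisely the technical content of the paper's proof, so invoking it as a black box leaves the proof with a genuine hole.

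What the paper does instead is to change the topology in which ``local minimizer'' is first established. It introduces the weight $v_1$, the solution of $-\Delta v+\hat{\xi}_{\rho_0}v=1$ with the same mixed boundary conditions, and the weighted space $\hat{C}_1=\{y\in E_{\Sigma_1}\cap C(\overline{\Omega}):\|y/v_1\|_{\infty}<\infty\}$. Lemma 2.1 of Barletta, Livrea \& Papageorgiou \cite{3} (or Lemma 5.3 of \cite{5}) supplies $\vartheta>0$ with $\vartheta v_1\leq u_\mu$ and $\vartheta v_1\leq u_0-u_\mu$; this is the correct replacement for your maximum-principle step, and it yields $u_0\in\mathrm{int}_{\hat{C}_1}\left[u_\mu\right)$, hence local $\hat{C}_1$-minimality of $\varphi_\lambda$ at $u_0$ via \eqref{eq24}. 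Then, rather than citing an equivalence theorem, the paper proves the implication ``local $\hat{C}_1$-minimizer $\Rightarrow$ local $E_{\Sigma_1}$-minimizer'' directly: assuming the contrary, it minimizes $\varphi_\lambda$ over small balls $\|y\|\leq\rho$ centered at $u_0$, applies the Lagrange multiplier rule to the constrained minimizers $u_0+y_\rho$, obtains uniform $C^{0,\alpha}$ bounds from the Colorado--Peral regularity theory, and finally compares $\pm y_\rho/\tau^*_\rho$ with $v_1$ to get $-\tau^*_\rho v_1\leq y_\rho\leq\tau^*_\rho v_1$ with $\tau^*_\rho\rightarrow 0^+$, i.e.\ $y_\rho\rightarrow 0$ in $\hat{C}_1$, contradicting $\hat{C}_1$-minimality. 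If you replace your second paragraph by this weighted-space argument, the remainder of your proposal (the $q>2$ estimate making $0$ a local minimizer, the reduction to finitely many critical points, the Palais--Smale condition from coercivity, and the mountain pass theorem producing $\hat{u}\notin\{0,u_0\}$) matches the paper and is sound.
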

\begin{proof}
	Let $\mu\in(\lambda_*,\lambda)$. By Proposition \ref{prop3} we know that $\mu\in\mathcal{L}$. Hence we can find $u_{\mu}\in S(\mu)\subseteq E_{\Sigma_1}\cap C^{0,\alpha}(\overline{\Omega})$ with $\alpha\in(0,1/2),u_{\mu}(z)>0$ for all $z\in\Omega$. We have
	$A(u_{\mu})=\mu u^{q-1}_{\mu}-N_f(u_{\mu})$ in $E^*_{\Sigma_1}$.
	Next, we define the following Carath\'eodory function
	\begin{equation}\label{eq17}
		\hat{h}_{\lambda}(z,x)=\left\{\begin{array}{ll}
			\lambda u_{\mu}(z)^{q-1}-f(z,u_{\mu}(z))&\mbox{if}\ x\leq u_{\mu}(z)\\
			\lambda x^{q-1}-f(z,x)&\mbox{if}\ u_{\mu}(z)<x.
		\end{array}\right.
	\end{equation}
	Let $\hat{H}_{\lambda}(z,x)=\int^x_0\hat{h}_{\lambda}(z,s)ds$ and let $\hat{\psi}_{\lambda}:E_{\Sigma_1}\rightarrow\RR$,
		$\hat{\psi}_{\lambda}(u)=\frac{1}{2}||Du||^2_2-\int_{\Omega}\hat{H}_{\lambda}(z,u)dz$.
		Then $\hat{\psi}_{\lambda}$ is coercive and sequentially weakly lower semicontinuous. Thus, we can find $u_0\in E_{\Sigma_1}$ such that $\hat{\psi}_{\lambda}(u_0)=\inf\{\hat{\psi}_{\lambda}(u):u\in E_{\Sigma_1}\}$, hence
		$\hat{\psi}'_{\lambda}(u_0)=0$. Thus,
			$A(u_0)=N_{\hat{h}_{\lambda}}(u_0)$. Using (\ref{eq17}) and reasoning as in the proof of Proposition \ref{prop3}
			we deduce that $u_{\mu}\leq u_0$.
		By Colorado \& Peral \cite[Theorem 6.6]{5}, we have
		$u_0\in E_{\Sigma_1}\cap C^{0,\alpha}(\overline{\Omega})$ with $\alpha\in(0,1/2)$
		and $u_0>0$ in $z\Omega$ (by Harnack's inequality).		
		
		Let $\rho_0=||u_0||_{\infty}$ and let $\hat{\xi}_{\rho_0}>0$ be as postulated in hypothesis $H(f)(iv)$. We have
		\begin{equation}\label{eq18}
			\left\{\begin{array}{l}-\Delta u_0(z)+\hat{\xi}_{\rho_0}u_0(z)=\lambda u_0(z)^{q-1}-f(z,u_0(z))+\hat{\xi}_{\rho_0}u_0(z)\ \mbox{in}\ \Omega,\\
		\di	u_0|_{\Sigma_1}=0,\left.\frac{\partial u_{0}}{\partial n}\right|_{\Sigma_2}=0
			\end{array}\right\}
		\end{equation}
and
		\begin{equation}\label{eq19}
			\left\{\begin{array}{l}
				-\Delta u_{\mu}(z)+\hat{\xi}_{\rho_0}u_{\mu}(z)=\mu u_{\mu}(z)^{q-1}-f(z,u_{\mu}(z))+\hat{\xi}_{\rho_0}u_{\mu}(z)\ \mbox{in}\ \Omega,\\
	\di			\hat{u}_{\mu}|_{\Sigma_1}=0,\left.\frac{\partial u_{\mu}}{\partial n}\right|_{\Sigma_2}=0.
			\end{array}\right\}
		\end{equation}
		Let $\hat{y}=u_0-u_{\mu}\geq 0$. Since $\lambda>\mu,u_0\geq u_{\eta}$, from (\ref{eq18}), (\ref{eq19}), and $H(f)(iv)$ we have
		\begin{eqnarray*}
			&&-\Delta\hat{y}(z)+\hat{\xi}_{\rho_0}\hat{y}(z)=\lambda u_0(z)^{q-1}-\mu u_{\mu}(z)^{q-1}+[\hat{\xi}_{\rho_0}u_0(z)-f(z,u_0(z))]-\\
			&&-[\hat{\xi}_{\rho_0}u_{\mu}(z)-f(z,u_{\mu}(z))]\geq 0\ \mbox{in}\ \Omega.
		\end{eqnarray*}	
		
		Let $v_1\in E_{\Sigma_1}$ be the unique function satisfying
		$-\Delta v(z)+\hat{\xi}_{\rho_0}v(z)=1$ $\Omega$,
			$v|_{\Sigma_1}=0$, and $\left.\frac{\partial v}{\partial n}\right|_{\Sigma_2}=0$.
		Then $v_1\in C^{1,\alpha}(\Omega)\cap C^{0,\alpha}(\overline{\Omega})$ with $\alpha\in(0,1/2)$ (see \cite{5,7}) and $v_1>0$ in $\Omega$. By Lemma 2.1 of Barletta, Livrea \& Papageorgiou \cite{3} (see also Lemma 5.3 of Colorado \& Peral \cite{5}), we can find $\vartheta>0$ such that
		\begin{equation}\label{eq20}
			\vartheta v_1(z)\leq u_{\mu}(z)\ \mbox{and}\ \vartheta v_1(z)\leq \hat{y}(z)\Rightarrow\vartheta v_1(z)\leq u_{\mu}(z)\leq u_0(z)-\vartheta v_1(z)\ \mbox{for all}\ z\in\overline{\Omega}.
		\end{equation}
		
		Let
		$\hat{C}_1=\left\{y\in E_{\Sigma_1}\cap C(\overline{\Omega}):\left\|\frac{y}{v_1}\right\|_{\infty}<\infty\right\}$ and
		 $\left[u_{\mu}\right)=\{u\in E_{\Sigma_1}:u_{\mu}\leq u(z),\ \mbox{a.a.}\ z\in\Omega\}$. We claim that
		if $\bar{B}_1(0):=\{y\in\hat{C}_1:\left\|\frac{y}{v_1}\right\|_{\infty}\leq 1\}$,
			then $u_0-\vartheta\bar{B}_1(0)\subseteq\left[u_{\mu}\right)\cap \hat{C}_1$.
		To see this, let $y\in\bar{B}_1(0)$. Then
		\begin{equation}\label{eq22}
		-v_1(z)\leq y(z)\leq v_1(z)\ \mbox{for all}\ z\in\overline{\Omega}.
		\end{equation}
		
		Fix $z\in\overline{\Omega}$. If $y(z)>0$, then
		$0\leq u_{\mu}(z)\leq u_{\mu}(z)+\vartheta y(z)\leq u_{\mu}(z)+\vartheta v_1(z)\leq u_0(z)$ (see (\ref{eq20}), (\ref{eq22})), hence
			$u_{\mu}(z)\leq u_0(z)-\vartheta y(z)$.	
		If $y(z)<0$, then
		$0\leq u_{\mu}(z)-\vartheta v_1(z)\leq u_{\mu}(z)+\vartheta y(z)\leq u_{\mu}(z)+\vartheta v_1(z)\leq u_0(z)$ (see (\ref{eq20}), (\ref{eq22})), hence
			$u_{\mu}(z)\leq u_0(z)-\vartheta y(z)$.
		We conclude that
		$u_{\mu}\in u_0-\vartheta\bar{B}_1(0),$
		which argues our above claim. It follows   that
		\begin{equation}\label{eq23}
			u_0\in {\rm int}_{\hat{C}_1}\left[u_{\mu}\right)\cap C(\overline{\Omega}).
		\end{equation}
		
		By (\ref{eq17}) it is clear that
		\begin{equation}\label{eq24}
			\hat{\psi}_{\lambda}(u)=\varphi_{\lambda}(u)+c_4\ \mbox{for some}\ c_4\in\RR\ \mbox{and for all}\ u\in\left[u_{\mu}\right).
		\end{equation}
		
		It follows from (\ref{eq23}) and (\ref{eq24}) that $u_0$ is a local $\hat{C}_1$-minimizer of $\varphi_{\lambda}$.
		
{\bf Claim.}
		{\it	$u_0$ is a local $E_{\Sigma_1}$-minimizer of $\varphi_{\lambda}$.}
		Suppose that this assertion is not true. Then for every $\rho>0$, we have
		$\inf\{\varphi_{\lambda}(u_0+y):y\in E_{\Sigma_1},\ ||y||\leq\rho\}<\varphi_{\lambda}(u_0).$
		By the Weierstrass-Tonelli theorem, there exists $y_{\rho}\in E_{\Sigma_1}\backslash\{0\},\ ||y_{\rho}||\leq\rho$ such that
		$\varphi_{\lambda}(u_0+y_{\rho})=\inf\{\varphi_{\lambda}(u_0+y):y\in E_{\Sigma_1},||y||\leq\rho\}<\varphi_{\lambda}(y_0).$
		By the Lagrange multiplier rule, there exists $\vartheta\leq 0$ such that
		$(1-\vartheta)\left\langle A(u_{\rho}),h\right\rangle=\lambda\int_{\Omega}(u^+_{\rho})^{q-1}hdz-\int_{\Omega}f(z,u_{\rho})hdz$ for all $h\in E_{\Sigma_1}$,
		with $u_{\rho}=u_0+y_{\rho}\in E_{\Sigma_1}$. It follows that $\Delta u_{\rho}(z)=\frac{1}{1-\vartheta}[\lambda u^+_{\rho}(z)^{q-1}-f(z,u_{\rho}(z))]$ in $\Omega$, hence
		\begin{equation}\label{eq25}
			-\Delta u_{\rho}(z)+\hat{\xi}_{\rho_0}u_{\rho}(z)=\frac{1}{1-\vartheta}[\lambda u^+_{\rho}(z)^{q-1}+f(z,u_{\rho}(z))]+\hat{\xi}_{\rho_0}u_{\rho}(z)\ \mbox{in}\ \Omega,	
		\end{equation}
		with $\hat{\xi}_{\rho_0}>0$ as before resulting from hypothesis $H(f)(iv)$ (recall that $\rho_0=||u_0||_{\infty}$).	
		Also,
		\begin{equation}\label{eq26}
			-\Delta u_0(z)+\hat{\xi}_{\rho_0}u_0(z)=\lambda u_0(z)^{q-1}-f(z,u_0(z))+\hat{\xi}_{\rho_0}u_0(z)\ \mbox{in}\ \Omega.
		\end{equation}
		
		From (\ref{eq25}) and (\ref{eq26}) we obtain
		\begin{equation}\label{eq27}
			-\Delta y_{\rho}(z)+\hat{\xi}_{\rho_0}y_{\rho}(z)=g^{\rho}_{\lambda}(z)\ \mbox{in}\ \Omega
		\end{equation}
		with $g^{\rho}_{\lambda}(z)=\frac{1}{1-\vartheta}[\lambda u^+_{\rho}(z)^{q-1}-f(z,u_{\rho}(z))]-\lambda u_0(z)^{q-1}+f(z,u_0(z))+\hat{\xi}_{\rho_0}y_{\rho}(z).$
By (\ref{eq27}) and Colorado \& Peral \cite{5}, there exist $c_5>0$ and $\alpha\in(0,1/2)$ such that
		\begin{equation}\label{eq28}
			y_{\rho}\in C^{0,\alpha}(\overline{\Omega})\ \mbox{and}\ ||y_{\rho}||_{C^{0,\alpha}(\overline{\Omega})}\leq c_5\ \mbox{for all}\ \rho\in\left(0,1\right].
 		\end{equation}
		
		Exploiting the compact embedding of $C^{0,\alpha}(\overline{\Omega})$ into $C(\overline{\Omega})$, we have
		$y_{\rho}\rightarrow 0\ \mbox{in}\ C(\overline{\Omega})$ as $\rho\rightarrow 0^+$. Thus,
		by the definition of $g^{\rho}_{\lambda}$, there exists $\tau^*_{\rho}>0$ such that
		\begin{equation}\label{eq30}
			||g^{\rho}_{\lambda}||_{\infty}\leq\tau^*_{\rho}\ \mbox{for all}\ \rho\in\left(0,1\right]\ \mbox{and}\ \tau^*_{\rho}\rightarrow 0^+\ \mbox{as}\ \rho\rightarrow 0^+.
		\end{equation}
		
		Let $\hat{y}_{\rho}=\frac{1}{\tau^*_{\rho}}y_{\rho}$. Then by (\ref{eq30})
		$-\Delta (\hat{y}_{\rho}-v_1)(z)+\hat{\xi}_{\rho_0}(\hat{y}_{\rho}-v_1)(z)=\frac{1}{\tau^*_{\rho}}g^{\rho}_{\lambda}(z)-1\leq 0$. We deduce that
			$||D(\hat{y}_{\rho}-v_1)^+||^2_2+\hat{\xi}_{\rho_0}||(\hat{y}_{\rho}-v_1)^+||^2_2\leq 0$, hence
			$ y_{\rho}\leq \tau^*_{\rho}v_1$.
		
		Also, we have
		$-\Delta (-\hat{y}_{\rho}-v_1)(z)+\hat{\xi}_{\rho_0}(-\hat{y}_{\rho}-v_1)(z)=-\frac{1}{\tau^*_{\rho}}g^{\rho}_{\lambda}(z)-1\leq 0$ in $\Omega$
		and so as above we obtain that
		$-\tau^*_{\rho}v_1\leq y_{\rho}.$
		Therefore we have proved that
		$-\tau^*_{\rho}v_1\leq y_{\rho}\leq \tau^*_{\rho}v_1$.
		These relations show that
		$y_{\rho}\in \hat{C}_1$ and $\left\|\frac{y_{\rho}}{v_1}\right\|_{\infty}\leq\tau^*_{\rho}$ for all $\rho\in\left(0,1\right]$, hence
			$y_{\rho}\rightarrow 0$ in $\hat{C}_1$ as $\rho\rightarrow 0^+$.
		Therefore for small $\rho\in\left(0,1\right]$ we have
		$\varphi_{\lambda}(u_0+y_{\rho})<\varphi_{\lambda}(u_0),$
		which contradicts the fact that $u_0$ is a local $\hat{C}_1$-minimizer of $\varphi_{\lambda}$. This proves the claim.
		
		Since $f\geq 0$, for all $u\in E_{\Sigma_1}$ we have
$\varphi_{\lambda}(u)\geq\frac{1}{2}||Du||^2_2-\frac{\lambda}{q}||u^+||^q_q
			\geq\frac{1}{2}||Du||^2_2-c_6||Du||^q_2$ for some $c_6>0$.
		Since $q>2$, we deduce that $u=0$ is a local minimizer of $\varphi_{\lambda}$. We assume that the set of critical points of $\varphi_{\lambda}$ is finite (otherwise we already have an infinity of positive solutions for \eqref{eqp} for $\lambda>\lambda_*$ and so we are done) and that $\varphi_{\lambda}(0)\leq\varphi_{\lambda}(u_0)$ (the reasoning is similar if the opposite inequality holds). The claim implies that we can find small enough $\rho\in(0,||u_0||)$ such that
$0=\varphi_{\lambda}(0)\leq\varphi_{\lambda}(u)<\inf\{\varphi_{\lambda}(u):||u-u_0||=\rho\}=m^{\rho}_{\lambda}$.
Thus, we can apply the mountain pass theorem. So, there exists $\hat{u}\in E_{\Sigma_1}$ such that
		$\varphi'_{\lambda}(\hat{u})=0$ and $m^{\rho}_{\lambda}\leq\varphi_{\lambda}(\hat{u})$, hence
			$\hat{u}\notin\{0,u_0\}$, $\hat{u}\in S_{\lambda}\subseteq E_{\Sigma_1}\cap C^{0,\alpha}(\overline{\Omega})$, and $\hat{u}>0$ in $\Omega$.
\end{proof}

\begin{prop}\label{prop5}
	If hypotheses $H(f)$ hold, then $\lambda_*\in\mathcal{L}$, that is, $\mathcal{L}=\left[\lambda^*,+\infty\right)$.
\end{prop}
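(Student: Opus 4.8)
The plan is to realize $\lambda_*$ as a limit of admissible parameters and to pass to the limit in the associated solutions. By Proposition~\ref{prop3} we have $(\lambda_*,+\infty)\subseteq\mathcal{L}$, so I would fix a sequence $\lambda_n\downarrow\lambda_*$ with $\lambda_n\in\mathcal{L}$ and choose $u_n\in S(\lambda_n)$, so that $u_n>0$ in $\Omega$ and $A(u_n)=\lambda_n u_n^{q-1}-N_f(u_n)$ in $E_{\Sigma_1}^*$. Since the sequence is decreasing, $\lambda_n\le\lambda_1$ for all $n$, which gives a uniform upper bound on the parameters that will be used throughout.

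The core of the argument is a two-sided uniform bound on $\{u_n\}$ in $E_{\Sigma_1}$. For the upper bound I would act on the equation with $u_n$ to obtain $\|Du_n\|_2^2=\lambda_n\|u_n\|_q^q-\int_{\Omega}f(z,u_n)u_n\,dz$. Hypothesis $H(f)(ii)$ (the superdiffusive growth $f(z,x)/x^{q-1}\to+\infty$) furnishes, for any fixed $M>\lambda_1$, a constant $c_M>0$ with $f(z,x)\ge Mx^{q-1}-c_M$ for a.a.\ $z\in\Omega$ and all $x\ge0$; hence $\|Du_n\|_2^2\le(\lambda_n-M)\|u_n\|_q^q+c_M\|u_n\|_1\le c_M\|u_n\|_1$. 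Because $\|u_n\|_1\le c'\|u_n\|_q\le c''\|Du_n\|_2$ (via $E_{\Sigma_1}\hookrightarrow L^q$ and the Poincar\'e inequality), this yields $\|u_n\|=\|Du_n\|_2\le C$; the logistic absorption term is precisely what makes this bound effortless. For the lower bound I would instead discard the nonnegative absorption: from the same identity and $f\ge0$ one has $\|Du_n\|_2^2\le\lambda_n\|u_n\|_q^q\le\lambda_1 c^q\|Du_n\|_2^q$, so dividing by $\|Du_n\|_2^2$ (legitimate since $u_n\neq0$) and using $q>2$ gives $\|u_n\|\ge(\lambda_1 c^q)^{-1/(q-2)}=:c_0>0$, uniformly in $n$.

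With $\{u_n\}$ bounded I would pass to a subsequence so that $u_n\rightharpoonup u_*$ in $E_{\Sigma_1}$ and $u_n\to u_*$ in $L^r(\Omega)$ and a.e.\ (the embedding $E_{\Sigma_1}\hookrightarrow L^r$ is compact as $r<2^*$). Acting on the equation with $u_n-u_*$ and using $H(f)(i)$ to bound $u_n^{q-1}$ and $f(\cdot,u_n)$ in $L^{r'}$ (here the inequality $q<r$ enters), the right-hand side tends to $0$, so that $\|D(u_n-u_*)\|_2^2=\langle A(u_n)-A(u_*),u_n-u_*\rangle\to0$ and $u_n\to u_*$ strongly in $E_{\Sigma_1}$. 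Passing to the limit in the weak formulation, with $\lambda_n\to\lambda_*$ and the continuity of the Nemitsky maps on $L^r$, gives $A(u_*)=\lambda_* u_*^{q-1}-N_f(u_*)$ with $u_*\ge0$. The uniform lower bound yields $\|u_*\|\ge c_0>0$, hence $u_*\neq0$; by Proposition~\ref{prop2} then $u_*\in S(\lambda_*)$ with $u_*>0$ in $\Omega$, so $\lambda_*\in\mathcal{L}$. Combining this with Proposition~\ref{prop2} (which gives $\lambda_*>0$) and Proposition~\ref{prop3} yields $\mathcal{L}=[\lambda_*,+\infty)$.

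The step I expect to be the crux is ruling out $u_*=0$, since the behaviour of $f$ near the origin in $H(f)(iii)$ by itself would not prevent the solutions from collapsing as $\lambda_n\downarrow\lambda_*$. What resolves it cleanly is the superdiffusive exponent $q>2$, which converts the elementary energy inequality into the uniform lower bound $\|u_n\|\ge c_0$; this nondegeneracy, preserved under the strong convergence, is exactly what guarantees a nontrivial limit.
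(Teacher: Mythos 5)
Your proof is correct, and it reaches the conclusion by a genuinely different route at the one step that actually requires an idea, namely ruling out $u_*=0$. The paper argues there by contradiction: assuming $u_n\to 0$ in $E_{\Sigma_1}$, it normalizes $y_n=u_n/\|u_n\|$, uses the $C^{0,\alpha}(\overline{\Omega})$ estimates of Colorado--Peral and the compact embedding into $C(\overline{\Omega})$ to get $u_n\to 0$ uniformly, invokes hypothesis $H(f)(iii)$ to identify the weak $L^{2}$-limit of $N_f(u_n)/\|u_n\|$ as $\eta_0 y$ with $0\le\eta_0\le\hat{\eta}$, and passes to the limit (via the Kadec--Klee property) in the normalized equation to obtain $A(y)=-\eta_0 y$ with $\|y\|=1$, whence $\|Dy\|_2^2\le 0$, a contradiction. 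You instead prove an a priori nondegeneracy bound: discarding the nonnegative absorption term in the energy identity and using the embedding $E_{\Sigma_1}\hookrightarrow L^q$ gives $\|Du_n\|_2^2\le\lambda_1 c^q\|Du_n\|_2^q$, and since $q>2$ and $u_n\neq 0$ this yields $\|Du_n\|_2\ge(\lambda_1 c^q)^{-1/(q-2)}>0$ uniformly in $n$; combined with the strong $E_{\Sigma_1}$-convergence $u_n\to u_*$ (which you justify correctly, and which the paper also obtains, via Kadec--Klee), this makes $u_*\neq 0$ immediate. Your route is more elementary and shorter: it needs neither regularity theory nor hypothesis $H(f)(iii)$, only $f\ge 0$, the Sobolev--Poincar\'e embedding, and the superdiffusive exponent, and it produces a uniform lower bound valid for every positive solution with parameter in a bounded set, which is of independent interest. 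What the paper's heavier machinery buys is robustness: a normalization-plus-spectral argument of that type survives in settings where your scaling computation degenerates --- most notably the equidiffusive case $q=2$, where $q-2=0$ and no such lower bound can be extracted. The remaining steps of your proposal (uniform upper energy bound, weak compactness, passage to the limit in the weak formulation, and assembling $\mathcal{L}=[\lambda_*,+\infty)$ from Propositions \ref{prop2} and \ref{prop3}) match the paper's.
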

\begin{proof}
	Let $\{\lambda_n\}_{n\geq 1}\subseteq(\lambda_*,+\infty)$ be such that $\lambda_n\downarrow\lambda_*$. We find $u_n\in S(\lambda_n)$ such that
	\begin{equation}\label{eq35}
		A(u_n)=\lambda u^{q-1}_n-N_f(u_n)\ \mbox{in}\ E^*_{\Sigma_1}\ \mbox{for all}\ n\in\NN.
	\end{equation}
	
	Hypotheses $H(f)(i),(ii)$ imply that given any $\xi>0$, we find $c_7=c_7(\xi)>0$ such that
	\begin{equation}\label{eq36}
		f(z,x)\geq\xi x^{q-1}-c_7\ \mbox{for almost all}\ z\in\Omega\ \mbox{and all}\ x\geq 0.
	\end{equation}
	
We act	on (\ref{eq35}) with $u_n\in E_{\Sigma_1}$ and then use (\ref{eq36}). We obtain
	$||Du||^2_2\leq(\lambda_n-\xi)||u_n||^q_q+c_7|\Omega|_N.$
	Choosing $\xi>\lambda_1\geq\lambda_n$ for all $n\in\NN$, we have
	$||Du_n||^2_2\leq c_7|\Omega|_N$ for all $n\in\NN$, hence
		$\{u_n\}_{n\geq 1}\subseteq E_{\Sigma_1}$ is bounded.
	By passing to a subsequence if necessary, we may assume that
	\begin{equation}\label{eq37}
		u_n\stackrel{w}{\rightarrow}u_*\ \mbox{in}\ E_{\Sigma_1}\ \mbox{and}\ u_n\rightarrow u\ \mbox{in}\ L^r(\Omega)\ \mbox{as}\ n\rightarrow\infty.
	\end{equation}
	In (\ref{eq35}) we pass to the limit as $n\rightarrow\infty$ and use (\ref{eq37}). Then $A(u_*)=\lambda_* u_*^{q-1}-N_f(u_*)$. Thus,
	$u_*\in E_{\Sigma_1}$ and  $u_*\geq 0$ is a solution of  ($P_{\lambda_*}$).
	We also notice that $\lim_{n\rightarrow\infty}\left\langle A(u_n),u_n-u_*\right\rangle=0$, hence $||Du_n||_2\rightarrow||Du_*||_2$. Using the Kadec-Klee property we deduce that $u_n\rightarrow u_*$ in $E_{\Sigma_1}$.
	
	{\bf Claim.} {\it
		$u_*\neq 0.$}
	Arguing by contradiction, suppose that $u_*=0$. Then
		$||u_n||\rightarrow 0$.
	Let $y_n=\frac{u_n}{||u_n||},\ n\in\NN$. Then $||y_n||=1,\ y_n\geq 0$ for all $n\in\NN$. From (\ref{eq35}) we have
	\begin{equation}\label{eq40}
		A(y_n)=\lambda_nu_n^{q-2}y_n-\frac{N_f(u_n)}{||u_n||}\ \mbox{for all}\ n\in\NN.
	\end{equation}
	
	From hypotheses $H(f)(i),(iii)$, we see that we can find $\eta>\hat{\eta}$ and $c_8>0$ such that
	\begin{equation}\label{eq41}
		f(z,x)\leq\eta x+c_8 x^{r-1}\ \mbox{for a.a.}\ z\in\Omega,\ \mbox{all}\ x\geq 0
		\Rightarrow\{N_f(u_n)\}_{n\geq 1}\subseteq L^2(\Omega)\ \mbox{is bounded}.
	\end{equation}
	
	By \cite{5}, there exist $\alpha\in(0,1/2)$ and $c_9>0$ such that
	$u_n\in C^{0,\alpha}(\overline{\Omega})$, $||u_n||_{C^{0,\alpha}(\overline{\Omega})}\leq c_9$ for all $n\in\NN$.
	Since $C^{0,\alpha}(\overline{\Omega})$ is compactly embedded compactly in $C(\overline{\Omega})$, we deduce that
	\begin{equation}\label{eq43}
		u_n\rightarrow 0\ \mbox{in}\ C(\overline{\Omega}).
	\end{equation}
	
	Recall that $||y_n||=1,\ y_n\geq 0$ for all $n\in\NN$. So, we may assume that
	\begin{equation}\label{eq44}
		y_n\stackrel{w}{\rightarrow}y\ \mbox{in}\ E_{\Sigma_1}\ \mbox{and}\ y_n\rightarrow y\ \mbox{in}\ L^2(\Omega), y\geq 0.
	\end{equation}
	
It follows	from (\ref{eq41}), (\ref{eq43}) and (\ref{eq44}) that
	$\left\{\frac{N_f(u_n)}{||u_n||}\right\}_{n\geq 1}\subseteq L^2(\Omega)$ is bounded.
	Thus, by hypothesis $H(f)(iii)$, we have at least for a subsequence,
	\begin{equation}\label{eq45}
		\frac{N_f(u_n)}{||u_n||}\stackrel{w}{\rightarrow}\eta_0y\ \mbox{in}\ L^2(\Omega)\ \mbox{with}\ 0\leq\eta_0(z)\leq\hat{\eta}\ \mbox{for almost all }z\in\Omega.
	\end{equation}
	We act on (\ref{eq40}) with $y_n-y\in E_{\Sigma_1}$ and  pass to the limit as $n\rightarrow\infty$. Using (\ref{eq43}), (\ref{eq44}) and (\ref{eq45}) we obtain $\lim_{n\rightarrow\infty}\left\langle A(y_n),y_n-y\right\rangle=0$. By the Kadec-Klee property we have $y_n\rightarrow y$, hence
	$||y||=1$, $y\geq 0$.
	In (\ref{eq40}) we pass to the limit as $n\rightarrow\infty$ and use (\ref{eq43}), (\ref{eq45}). Then
	$A(y)=-\eta_0y$. Thus, by (\ref{eq45})  we have
		$||Dy||^2_2=-\int_{\Omega}\eta_0y^2dz\leq 0$, hence
		$y=0$, a contradiction.
	This shows that the claim is true.
	Hence $u_*\in S(\lambda_*)\subseteq E_{\Sigma_1}\cap C(\overline{\Omega})$ and so $\lambda_*\in\mathcal{L}$.
\end{proof}

Summarizing, we can state the following bifurcation-type theorem.
\begin{theorem}\label{th6}
	If hypotheses $H(f)$ hold, then there exists $\lambda_*>0$ such that
	\begin{itemize}
		\item[(a)] for all $\lambda>\lambda_*$, problem \eqref{eqp} has at least two positive solutions
		$u_0,\hat{u}\in E_{\Sigma_1}\cap C(\overline{\Omega})$;
		\item[(b)] for $\lambda=\lambda_*$, problem \eqref{eqp} has at least one positive solution
		$u_*\in E_{\Sigma_1}\cap C(\overline{\Omega})$;
		\item[(c)] for $\lambda\in(0,\lambda_*)$, problem \eqref{eqp} has no positive solutions.
	\end{itemize}
\end{theorem}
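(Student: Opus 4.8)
The plan is to obtain Theorem \ref{th6} as a direct consolidation of Propositions \ref{prop2}--\ref{prop5}, which together furnish a complete description of the set $\mathcal{L}$ of admissible parameters. First I would fix the critical value $\lambda_*=\inf\mathcal{L}$ and verify that it is a genuine positive real number: Proposition \ref{prop3} yields $\mathcal{L}\neq\emptyset$, so that $\lambda_*<+\infty$, while the comparison against the first eigenvalue $\hat{\lambda}_1$ carried out in Proposition \ref{prop2} gives the lower bound $0<\lambda_0\leq\lambda_*$. This secures the existence of the threshold $\lambda_*>0$ asserted in the statement.

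With $\lambda_*$ in hand, the three cases are read off as follows. For part (c), if $\lambda\in(0,\lambda_*)$ then $\lambda<\inf\mathcal{L}$ forces $\lambda\notin\mathcal{L}$, and by the very definition of $\mathcal{L}$ this is precisely the nonexistence of a positive solution of \eqref{eqp}. For part (a), I would invoke Proposition \ref{prop4} verbatim: for every $\lambda>\lambda_*$ it produces two distinct positive solutions $u_0,\hat{u}\in E_{\Sigma_1}\cap C^{0,\alpha}(\overline{\Omega})$, strictly positive on $\Omega$, and the embedding $C^{0,\alpha}(\overline{\Omega})\hookrightarrow C(\overline{\Omega})$ delivers the regularity announced here. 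For part (b), Proposition \ref{prop5} shows $\lambda_*\in\mathcal{L}$, so that $(P_{\lambda_*})$ admits at least one positive solution $u_*\in E_{\Sigma_1}\cap C(\overline{\Omega})$. Combining the upward closure from Proposition \ref{prop3} with $\lambda_*\in\mathcal{L}$ simultaneously confirms the identity $\mathcal{L}=[\lambda_*,+\infty)$ implicit in the trichotomy.

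Since the theorem is a summary, the assembling step is routine and the substantive difficulty lives entirely in the propositions it cites. If one had to reprove the result without them, the two genuine obstacles would be the following. First, the multiplicity in part (a): after producing the minimizer $u_0$ of the truncated functional $\hat{\psi}_{\lambda}$, one must upgrade it from a local $\hat{C}_1$-minimizer to a local $E_{\Sigma_1}$-minimizer of $\varphi_{\lambda}$ (the $C^1$-versus-Sobolev minimizer principle, handled through the regularity estimate on $g^{\rho}_{\lambda}$ and the barrier $v_1$), so that the mountain pass geometry around the pair $\{0,u_0\}$ yields the second solution $\hat{u}$. Second, the existence at the endpoint in part (b): passing to the limit along $\lambda_n\downarrow\lambda_*$ is easy once the solutions $u_n$ are shown to be bounded in $E_{\Sigma_1}$, but excluding the degenerate limit $u_*=0$ requires the normalization $y_n=u_n/||u_n||$ together with the eigenvalue-type contradiction $||Dy||^2_2=-\int_{\Omega}\eta_0y^2dz\leq 0$, which forces $y=0$ against $||y||=1$. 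These are exactly the points where the real work is concentrated.
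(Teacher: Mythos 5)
Your proposal is correct and matches the paper exactly: the paper states Theorem \ref{th6} as a summary (``Summarizing, we can state the following bifurcation-type theorem'') whose proof is precisely the consolidation of Propositions \ref{prop2}--\ref{prop5} that you describe, with part (a) from Proposition \ref{prop4}, part (b) from Proposition \ref{prop5}, part (c) from the definition of $\lambda_*=\inf\mathcal{L}$, and positivity of $\lambda_*$ from Proposition \ref{prop2}. Your identification of where the substantive work lies (the local $\hat{C}_1$-versus-$E_{\Sigma_1}$ minimizer upgrade and the exclusion of $u_*=0$ at the endpoint) is also accurate.
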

	
	\medskip
{\bf Acknowledgments.} This research was supported by the Slovenian Research Agency grants P1-0292, J1-8131, and J1-7025. V.D. R\u adulescu acknowledges the support through a grant of the Romanian National Authority for Scientific Research and Innovation, CNCS-UEFISCDI, project number PN-III-P4-ID-PCE-2016-0130.

\end{document}